\numberwithin{equation}{section}
 \newtheorem{theorem}{Theorem}[section]
\theoremstyle{definition}
\theoremstyle{ex}
\newcommand{\e}{\end{document}}
\begin{document}

\thispagestyle{empty}

\author{
{{\bf Beih S. El-Desouky, Abdelfattah Mustafa\footnote{Corresponding author: amelsayed@mans.edu.eg} {} and Shamsan AL-Garash}
\newline{\it{{}  }}
 } { }\vspace{.2cm}\\
 \small \it Department of Mathematics, Faculty of Science, Mansoura University, Mansoura 35516, Egypt.
}

\title{The Beta Flexible Weibull Distribution}

\date{}

\maketitle
\small \pagestyle{myheadings}
        \markboth{{\scriptsize The Beta Flexible Weibull Distribution}}
        {{\scriptsize {Beih S. El-Desouky, Abdelfattah Mustafa and Shamsan AL-Garash}}}

\hrule \vskip 8pt

\begin{abstract}
We introduce in this paper a new generalization of the flexible Weibull distribution with four parameters. This  model based on the Beta generalized (BG) distribution, Eugene et  al.   \cite{Eugeneetal2002}, they first using the BG distribution for generating new generalizations. This new model is called the beta flexible Weibull BFW distribution.  Some statistical properties such as the mode, the $r$th moment, skewness and kurtosis are derived. The moment generating function and the order statistics are obtained. Moreover, the estimations of the parameters are given by maximum likelihood method and the Fisher's information matrix is derived. Finally,  we study the advantage of the BFW distribution by an application using real data set.
\end{abstract}

\noindent
{\bf Keywords:}
{\it Beta Flexible Weibull; Beta Generalized distribution; Modified Flexible Weibull Distribution; Beta Weibull Distribution; Maximum Likelihood Method.}

\noindent
{\it {\bf 2010 MSC:}}
{\em  60E05, 62N05, 62H10}


\section{Introduction}
In recent years appeared many generalizations which based on the Weibull (WD) distribution, Weibull\cite{Weibull1951}, such as  Modified Weibull (MW) distribution  submitted by Lai et al. \cite{Laietal2003}, and Sarhan et al. \cite{SarhanandZaindin2009}, generalized modified Weibull (GMW) distribution,  Carrasco et al. \cite{Carrascoetal2008}.  Kumaraswamy Weibull (KW) distribution, Cordeiro et al. \cite{Cordeiroetal2010}, exponentiated modified Weibull extension (EMWE) distribution, Sarhan et al. \cite{SarhanandApaloo2013}. and flexible Weibull extension (FWE) distribution, introduced by Bebbington et  al. \cite{Bebbingtonetal2007}.\\

We said a random variable $X$ has a flexible Weibull (FW) distribution  with two parameter, if it's  cumulative distribution function (CDF) is given as follows
\begin{equation} \label{1}
 F_{FW}(x;\alpha ,\beta )=1-\exp\left\{-e^{\alpha x -\frac{\beta }{x}}\right\},  \quad  \alpha,\beta \; \text{and} \;\; x >0.
\end{equation}
Moreover the probability density function (PDF) corresponding Eq. (\ref{1}) is given by
\begin{equation} \label{2}
  f_{FW}(x;\alpha ,\beta )=(\alpha +\frac {\beta}{x^2})e^{\alpha x -\frac {\beta }{x}} \exp\left\{-e^{\alpha x -\frac{\beta }{x}}\right\},  \quad x>0.
\end{equation}
Some generalizations of the flexible Weibull distribution were discussed recently, such as  exponentiated flexible Weibull extension (EFWE) distribution, Weibull generalized flexible Weibull extension (WG-FWE) distribution, Mustafa et al.  \cite{Mustafaetal2016a}, kumaraswamy flexible Weibull extension (KFWE) distribution, El Damcese et al.  \cite{Damceseetal2016}, inverse flexible Weibull extension (IFWE) distribution and exponentiated generalized flexible Weibull extension (EG-FWE) distribution, Mustafa et al.  \cite{Mustafaetal2016b}.

\noindent
In this paper, we give a new generalization of the flexible Weibull distribution. This new generalization is called  beta flexible Weibull (BFW) distribution, using class of beta generalized (BG) distribution, Eugene et al. \cite{Eugeneetal2002}. By substituting $F(x,\varphi)$ given in Eq. (\ref{3}) to be $F_{FW}(x,\alpha , \beta )$ given in Eq. (\ref{1}).
\\
If $F(x,\varphi)$ the baseline CDF of a random variable, then the beta generalized distribution is given by
\begin{equation} \label{3}
F_{BG}(x;p,q,\varphi)=I_{F(x;\varphi)}(p,q)=\frac{1}{B(p,q)}\int\limits_{0}^{F(x,\varphi)}u^{p-1}(1-u)^{q-1}du,  \quad p, q>0,
\end{equation}
where $\varphi$ is the parameter vector, $I_y(p,q)=\frac{B_y(a,q)}{B(a,q)}$ is the incomplete beta function ratio, and
$$
B_y(p,q)=\int\limits_0^{y}u^{p-1}(1-u)^{q-1}du.
$$

\noindent
The PDF of the BG distribution corresponding Eq. (\ref{3}) is
\begin{equation} \label{4}
f_{BG}(x;p,q,\varphi)=\frac{f(x;\varphi)}{B(p,q)}F(x;\varphi )^{p-1}\left[1-F(x;\varphi )\right]^{q-1}.
\end{equation}

\noindent
The reliability function and failure rate function of the BG distribution corresponding Eq.(\ref{3}) are given, respectively, by the relations
\begin{equation} \label{5}
S_{BG}(x;p,q,\varphi)=1-F_{BG}(x;p,q,\varphi)=1-I_{F(x;\varphi )}(p,q),
\end{equation}

\noindent
and
\begin{equation} \label{6}
h_{BG}(x;p,q,\varphi)=\frac{f(x;\varphi)F(x;\varphi )^{p-1}\left[1-F(x;\varphi )\right]^{q-1}}{B(p,q)\left[ 1-I_{F( x;\varphi )}(p,q)\right ]}.
\end{equation}

\noindent
Using expression $B(p,q)=\frac{\Gamma (p).\Gamma (q)}{\Gamma (p+q)}$, and the series representation
$$
(1-u)^{q-1}=\sum_{i=0}^{q-1} \frac{(-1)^i \Gamma (q)}{i! \Gamma (q-i)} u^i,\quad \text{for real non-integer} \quad |u|<1, \quad q>0,
$$

\noindent
the  CDF of the BG distribution in Eq. (\ref{3}) can be rewritten as follows
\begin{equation} \label{7}
F_{BG}(x;p,q,\varphi)=\frac{\Gamma (p+q)}{\Gamma (p)}\sum_{i=0}^{q-1 }\frac{(-1)^i\left[F(x;\varphi )\right]^{p+i}}{i! (p+i) \Gamma (q-i)},  \quad p, q>0.
\end{equation}

Recently the class of BG  distribution Eq. (\ref{3}) has been receiving considerable attention. This distribution was firstly studied by Eugene et al. \cite{Eugeneetal2002}. Many researchers considered different forms of distribution function $F(x,\varphi)$ given in Eq. (\ref{3}) and studied their properties.  Beta normal (BN) distribution, Eugene et al. \cite{Eugeneetal2002}, beta gumbel (BG) distribution  has been introduced by Nadarajah and Kotz \cite{Nadarajahkotz2004},  beta Weibull (BW) distribution,  Famoye et al. \cite{Famoyeetal2005}, beta modified Weibull (BMW) distribution,  Silva et al. \cite{Silvaetal2010} and  beta exponential (BE) distribution, Nadarajah et al. \cite{Nadarajahkotz2006}.\\

This paper is arranged as follows, the distribution function, density function and failure rate function of the BFW distribution  are defined in Section 2. Some statistical properties including, the mode, $rth$ moment, skewness and kurtosis are presented in Sections 3. The moment generating function is derived in Section 4. The order statistics is discussed in Section 5. The maximum likelihood estimation MLEs of the parameters is obtained in Section 6. Real data set are analyzed in Section 7. Moreover,  we discuss the results and compare it with existing distributions. Finally,  we give a conclusion  in Section 8.


\section{The Beta Flexible Weibull Distribution}
We define the BFW($\alpha, \beta, p, q $) distribution, by substituting  $F(x,\varphi)$ in Eq. (\ref{7}) to be the distribution function $F_{FW}(x; \alpha, \beta )$  of the FWD in Eq. (\ref{1}). So  the CDF of the Beta Flexible Weibull distribution is
\begin{equation} \label{14}
F_{BFW}(x;\eta  ) = \frac{\Gamma (p+q)}{\Gamma (p)}\sum_{i=0}^{q-1}\frac{(-1)^i \left[1 - e^{-e^{\alpha x-\frac{\beta }{x}}} \right ]^{p+i}}{i! \Gamma (q-i) (p+i)}, \;  \forall x>0,
\end{equation}

\noindent
where $\eta = (\alpha ,\beta, p, q )$, $\alpha, \beta, p>0$ and $q>0$.
\\
Replacing $F_{FW}(x;\alpha ,\beta )$ in Eq. (\ref{1}) and $f_{FW}(x;\alpha ,\beta)$ in Eq. (\ref{2}) by the $F(x,\varphi)$ and $f(x;\varphi)$ in Eq. (\ref{4}), respectively, the PDF corresponding Eq.(\ref{14}) can be obtained as follows
\begin{equation} \label{15}
f_{BFW}(x;\eta  ) = \frac{\Gamma (p+q)}{\Gamma (p).\Gamma (q)} \left(\alpha +\frac{\beta }{x^2}\right) e^{\alpha x -\frac{\beta }{x}} e^{-q e^{\alpha x -\frac{\beta }{x}}} \left[1 - e^{-e^{\alpha x-\frac{\beta }{x}}} \right ]^{p-1}.
\end{equation}

The reliability function $S_{BFW}(x)$ and failure rate $h_{BFW}(x)$ function of $X$ having  the BFWD$(\eta )$, respectively, are given by
\begin{equation} \label{16}
S_{BFW}(x;\eta ) =1- \frac{\Gamma (p+q)}{\Gamma (p)}\sum_{i=0}^{q-1}\frac{(-1)^i \left[1 - e^{-e^{\alpha x-\frac{\beta }{x}}} \right ]^{p+i}}{i! \Gamma (q-i) (p+i)}, \; \forall x>0,
\end{equation}
and
\begin{equation} \label{17}
h_{BFW}(x;\eta )= \frac{\frac{\Gamma (p+q)}{\Gamma (p).\Gamma (q)} \left(\alpha +\frac{\beta }{x^2}\right) e^{\alpha x -\frac{\beta }{x}} e^{-q e^{\alpha x -\frac{\beta }{x}}} \left[1 - e^{-e^{\alpha x-\frac{\beta }{x}}} \right ]^{p-1}}{1- \frac{\Gamma (p+q)}{\Gamma (p)} \sum_{i=0}^{q-1}\frac{(-1)^i \left[1 - e^{-e^{\alpha x-\frac{\beta }{x}}} \right ]^{p+i}}{i! \Gamma (q-i) (p+i)}}.
\end{equation}

\noindent
Also, the reversed failure rate $r_{BFW}(x)$ and cumulative-failure rate $H_{BFW}(x)$ functions of $X$ having  the BFWD$(\eta )$, respectively, are given by
\begin{equation} \label{18}
r_{BFW}(x;\eta  ) = \frac{\frac{\Gamma (p+q)}{\Gamma (p).\Gamma (q)} \left(\alpha +\frac{\beta }{x^2}\right) e^{\alpha x -\frac{\beta }{x}} e^{-q e^{\alpha x -\frac{\beta }{x}}} \left[1 - e^{-e^{\alpha x-\frac{\beta }{x}}} \right ]^{p-1}}{\frac{\Gamma (p+q)}{\Gamma (p)}\sum_{i=0}^{q-1}\frac{(-1)^i \left[1 - e^{-e^{\alpha x-\frac{\beta }{x}}} \right ]^{p+i}}{i! \Gamma (q-i) (p+i)}},
\end{equation}
and
\begin{equation} \label{19}
H_{BFW}(x;\eta )= \int_0^x h_{BFW}(t)dt= \int_0^x \frac{\frac{\Gamma (p+q)}{\Gamma (p).\Gamma (q)} \left(\alpha +\frac{\beta }{t^2}\right) e^{\alpha t -\frac{\beta }{t}} e^{-q e^{\alpha t -\frac{\beta }{t}}} \left[1 - e^{-e^{\alpha t-\frac{\beta }{t}}} \right ]^{p-1}}{1- \frac{\Gamma (p+q)}{\Gamma (p)}\sum_{i=0}^{q-1}\frac{(-1)^i \left[1 - e^{-e^{\alpha t-\frac{\beta }{t}}} \right ]^{p+i}}{i! \Gamma (q-i) (p+i)}}dt.
\end{equation}

Figures \ref{fig:1}, \ref{fig:2} and \ref{fig:3} display functions the CDF, PDF, $S_{BFW}(x)$, $h_{BFW}(x)$, $r_{BFW}(x)$ and $H_{BFW}(x)$ of the BFWD$(\eta )$ for different values of parameters.

\begin{figure} [!h]
\begin{center}
\includegraphics[scale = 0.55]{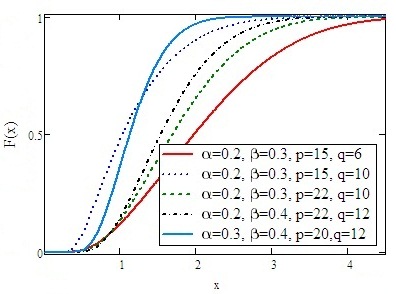}
\includegraphics[scale = 0.55]{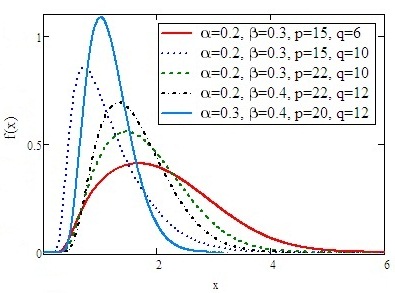}
\end{center}
\caption{Shows the CDF and PDF of the BFWD for different values of parameters}
\label{fig:1}
\end{figure}

\begin{figure} [!h]
\begin{center}
\includegraphics[scale = 0.55]{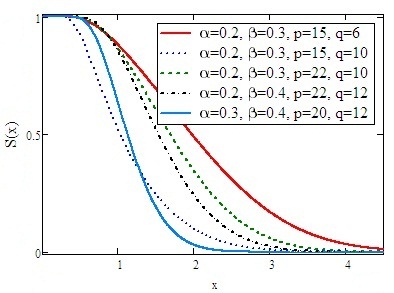}
\includegraphics[scale = 0.55]{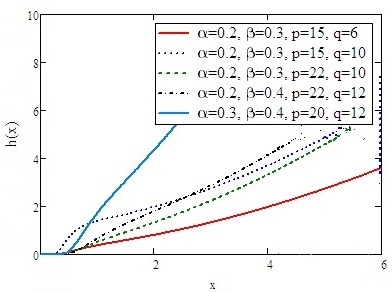}
\end{center}
\caption{Shows the $S_{BFW}(x)$ and $h_{BFW}(x)$ of the BFWD for different values of parameters}
\label{fig:2}
\end{figure}

\begin{figure} [!h]
\begin{center}
\includegraphics[scale = 0.55]{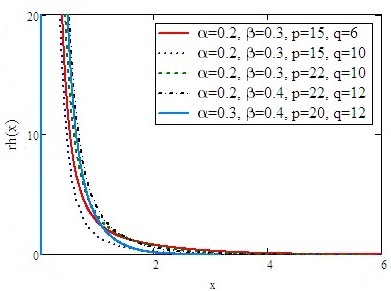}
\includegraphics[scale = 0.55]{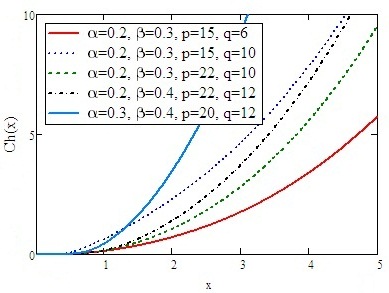}\\
\end{center}
\caption{Shows the $r_{BFW}(x)$ and $H_{BFW}(x)$ of the BFWD for different values of parameters}
\label{fig:3}
\end{figure}
\section{Statistical Properties}
Some statistical properties for the BFWD$(\eta) $, such as the mode, the $r$th moment,  skewness and kurtosis are given as follows.

\subsection{The Mode of the BFWD}
The mode of the BFWD$(\eta )$  can be obtained by differentiating its probability density function with respect to  $x$ in Eq. (\ref{15})  and equating it to zero.
\begin{equation*}
f^{'}(x; \eta )=0.
\end{equation*}
So the mode of the BFWD$(\eta) $ is  solution of the following relation
\begin{eqnarray} \label{mod9}
&&
\frac{\Gamma (p+q)}{\Gamma (p).\Gamma (q)} e^{\alpha x -\frac{\beta }{x}} e^{-q e^{\alpha x -\frac{\beta }{x}}} \left[ 1 - e^{-e^{\alpha x-\frac{\beta }{x}}} \right ]^{p-1}\times \nonumber \\ & & \left\{ \frac{-2\beta }{x^3}+(\alpha +\frac{\beta }{x^2})^2\left[1-e^{\alpha x -\frac{\beta }{x}}\left ( q+\frac{(p-1)}{e^{e^{\alpha x-\frac{\beta }{x}}}-1}\right )\right ] \right\}=0.
\end{eqnarray}%

\noindent
The BFWD$(\eta) $ has exactly only one peak, so this generalization is a unimodal. Figure (2) shows that  Eq. (\ref{mod9}) has only one solution. We can solve it numerically.

\subsection{The Moments}
The $rth$ moment for BFWD$(\eta )$ is given by  Theorem (\ref{Th1}). The gamma function with negative integer is defined by
$$
\Gamma (-r)=\frac{(-1)^r}{r!}\left[ \phi (r)-\gamma \right ],
$$
where $\gamma $ denote Euler's constant which is defined as
$$
\gamma =\lim_{n\rightarrow \infty } \sum_{k=1}^{n} \left ( \frac{1}{k}-\ln(n)\right ), \qquad \phi (r)=\sum_{i=1}^{r}\frac{1}{i},
$$
see  Fisher and Kilicman \cite{Fisheretal2012}.
\begin{theorem} \label{Th1}
If $X$ random variable having the BFWD$(\eta )$, so the $r$th moment of  $X$, is given by
\begin{eqnarray}  \label{26}
\mu_ r^{'} & = &  \frac{\Gamma (p+q)}{\Gamma (q)}\sum_{n=0}^{\infty}\sum_{m=0}^{\infty}\sum_{\ell=0}^{\infty} \frac{(-1)^{n+m}(q+n)^m (m+1)^{r+2\ell +1} \alpha ^\ell \beta ^{r+\ell+1}}{n! m! \ell!\Gamma (p-n)}\times
\nonumber\\
&& \hspace{3 cm}
\left[ \alpha \Gamma(-r-\ell-1)+\frac{\Gamma(-r-\ell+1)}{(m+1)^2 \beta}\right ].
\end{eqnarray}
\end{theorem}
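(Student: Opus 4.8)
The plan is to evaluate $\mu_r' = \int_0^\infty x^r f_{BFW}(x;\eta)\,dx$ directly from the density in Eq.~(\ref{15}), the whole point being to convert the two awkward factors $\left[1-e^{-e^{\alpha x-\beta/x}}\right]^{p-1}$ and $e^{-q e^{\alpha x-\beta/x}}$ into ordinary exponentials in $\alpha x-\beta/x$ by three successive series expansions, and then to integrate term by term. First I would apply the generalized binomial series $(1-u)^{p-1}=\sum_{n=0}^\infty \frac{(-1)^n\Gamma(p)}{n!\,\Gamma(p-n)}u^n$ with $u=e^{-e^{\alpha x-\beta/x}}$; this merges with $e^{-q e^{\alpha x-\beta/x}}$ to produce $e^{-(q+n)e^{\alpha x-\beta/x}}$. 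Expanding once more, $e^{-(q+n)e^{\alpha x-\beta/x}}=\sum_{m=0}^\infty \frac{(-1)^m (q+n)^m}{m!}\,e^{m(\alpha x-\beta/x)}$, and absorbing the stray factor $e^{\alpha x-\beta/x}$ already present in Eq.~(\ref{15}), the integrand becomes a double series whose generic term is a constant times $\left(\alpha+\frac{\beta}{x^2}\right)e^{(m+1)(\alpha x-\beta/x)}$.

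Next I would expand the remaining growing exponential, $e^{(m+1)\alpha x}=\sum_{\ell=0}^\infty \frac{(m+1)^\ell\alpha^\ell}{\ell!}x^\ell$, which isolates all the polynomial-in-$x$ behaviour in front of the decaying factor $e^{-(m+1)\beta/x}$. After interchanging the three summations with the integral, the computation reduces to integrals of the form $\int_0^\infty x^{k}e^{-(m+1)\beta/x}\,dx$ with $k=r+\ell$ (from the $\alpha$ term) and $k=r+\ell-2$ (from the $\beta/x^2$ term). The substitution $t=1/x$ turns each into $\int_0^\infty t^{-k-2}e^{-(m+1)\beta t}\,dt=\Gamma(-k-1)\,\bigl((m+1)\beta\bigr)^{k+1}$, where $\Gamma$ at the (generically non-positive) argument $-k-1$ is read through the extended definition $\Gamma(-r)=\frac{(-1)^r}{r!}\bigl[\phi(r)-\gamma\bigr]$ recalled just before the theorem, following Fisher and Kilicman~\cite{Fisheretal2012}. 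Taking $k=r+\ell$ and $k=r+\ell-2$, factoring out $\bigl((m+1)\beta\bigr)^{r+\ell+1}=(m+1)^{r+\ell+1}\beta^{r+\ell+1}$, leaves precisely the bracket $\alpha\,\Gamma(-r-\ell-1)+\frac{\Gamma(-r-\ell+1)}{(m+1)^2\beta}$.

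It then remains to collect the constants. Multiplying the prefactor $\frac{\Gamma(p+q)}{\Gamma(p)\Gamma(q)}$ by the coefficient $\frac{(-1)^n\Gamma(p)}{n!\,\Gamma(p-n)}$ from the binomial series (the two $\Gamma(p)$'s cancel, leaving $\frac{\Gamma(p+q)}{\Gamma(q)}$ and $\frac{1}{\Gamma(p-n)}$), by $\frac{(-1)^m(q+n)^m}{m!}$ from the exponential expansion, by $\frac{(m+1)^\ell\alpha^\ell}{\ell!}$ from the last expansion, and by the $(m+1)^{r+\ell+1}\beta^{r+\ell+1}$ produced by the integral, I obtain the common factor $\frac{\Gamma(p+q)}{\Gamma(q)}\cdot\frac{(-1)^{n+m}(q+n)^m(m+1)^{r+2\ell+1}\alpha^\ell\beta^{r+\ell+1}}{n!\,m!\,\ell!\,\Gamma(p-n)}$, which is exactly the summand of Eq.~(\ref{26}).

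I expect the real difficulty to lie not in the algebra but in its analytic status: the integrals $\int_0^\infty t^{-k-2}e^{-(m+1)\beta t}\,dt$ diverge at $t=0$ once $k\ge -1$, so the identification with $\Gamma(-k-1)$ is a regularized (finite-part) evaluation, and the term-by-term integration of the triple series must be understood in the same formal sense. Consistently with the level of rigour customary for moment expansions in the generalized-Weibull literature, I would present the derivation as a formal computation, flagging the use of the extended gamma function, rather than attempting to justify convergence and the interchange of sums and integral.
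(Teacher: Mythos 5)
Your proposal is correct and follows essentially the same route as the paper's proof: the same three successive series expansions (the generalized binomial series for $\left[1-e^{-e^{\alpha x-\beta/x}}\right]^{p-1}$, the exponential series for $e^{-(q+n)e^{\alpha x-\beta/x}}$, and the series for $e^{(m+1)\alpha x}$), followed by term-by-term integration expressed through the gamma function at negative arguments in the Fisher--Kilicman extended sense. Your closing remark about the divergent integrals and the formal/regularized character of the identification is a fair description of the level of rigour at which the paper itself operates.
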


\begin{proof}
The $r$th moment of the random variable $X$ with the PDF is given by
\begin{equation} \label{27}
\mu_ r^{'}=\int\limits_{0}^{\infty} x^r f(x) dx.
\end{equation}
	
Using PDF for the BFWD$(\eta )$ from Eq. (\ref{15}) into Eq. (\ref{27}) we get
\begin{equation*}
\mu_ r^{'}= \int_0^\infty x^r \frac{\Gamma (p+q)}{\Gamma (p).\Gamma (q)} \left(\alpha +\frac{\beta }{x^2}\right) e^{\alpha x -\frac{\beta }{x}} e^{-q e^{\alpha x -\frac{\beta }{x}}} \left[1 - e^{-e^{\alpha x-\frac{\beta }{x}}} \right ]^{p-1} dx,
\end{equation*}
	
\noindent
where  the
$\left[1 - e^{-e^{\alpha x-\frac{\beta }{x}}} \right ]^{p-1}$  can be written as
$$
\left[1 - e^{-e^{\alpha x-\frac{\beta }{x}}} \right ]^{p-1}=\sum_{n=0}^{\infty}\frac{(-1)^n \Gamma (p)}{n! \Gamma (p-n)} e^{-n e^{\alpha x -\frac{\beta }{x}}},
$$
then we get
\begin{equation*}
\mu_ r^{'}= \frac{\Gamma (p+q)}{\Gamma (p).\Gamma (q)} \sum_{n=0}^{\infty}\frac{(-1)^n \Gamma (p)}{n! \Gamma (p-n)} \int_0^{\infty} x^r \left[\alpha  +\frac{\beta }{x^2}\right] e^{\alpha x - \frac{\beta }{x}} e^{-(q+n)e^{\alpha x - \frac{\beta }{x}}} dx,
\end{equation*}
\noindent
the function $e^{-(q+n) e^{\alpha x - \frac{\beta }{x}}}$, can be written as
\[
e^{-(q+n) e^{\alpha x - \frac{\beta }{x}}} =\sum_{m=0}^{\infty}\frac{(-1)^m(q+n)^m}{m!} e^{m(\alpha x -\frac{\beta }{x})},
\]
we obtain
\begin{equation*}
\mu_ r^{'} =  \frac{\Gamma (p+q)}{\Gamma (p).\Gamma (q)}\sum_{n=0}^{\infty}\sum_{m=0}^{\infty} \frac{(-1)^{n+m} \Gamma (p)(q+n)^m}{n! m! \Gamma (p-n)} \int_0^{\infty} x^r \left[\alpha  +\beta x^{-2}\right] e^{(m+1)\alpha  x} e^{-(m+1)\frac{\beta }{x}} dx,
\end{equation*}
	
\noindent
using series expansion of  $e^{(m+1)\alpha  x}$,
\[
e^{(m+1)\alpha x}= \sum_{\ell=0}^{\infty}\frac{(m+1)^\ell \alpha  ^\ell x^\ell }{\ell!},
\]
we have
\begin{eqnarray*}
\mu_ r^{'} & = &  \frac{\Gamma (p+q)}{\Gamma (p).\Gamma (q)}\sum_{n=0}^{\infty}\sum_{m=0}^{\infty}\sum_{\ell=0}^{\infty} \frac{(-1)^{n+m} \Gamma (p)(q+n)^m (m+1)^\ell \alpha  ^\ell}{n! m! \ell!\Gamma (p-n)}\times
\nonumber\\
&& \hspace{3 cm}
\left [ \int_0^{\infty} \alpha x^{r+\ell} e^{-(m+1)\frac{\beta }{x}}  dx
+ \int_0^{\infty}\beta x^{r+\ell-2} e^{-(m+1)\frac{\beta }{x}}  dx\right ],
\end{eqnarray*}
	
where gamma function is defined by ( see Zwillinger \cite{Zwillinger2014}),
\[
\Gamma (\nu  )=x^\nu   \int_0^{\infty}e^{-tx} t^{\nu  -1}dt, \quad x>0.
\]

Finally the $r$th moment of the BFWD$(\eta )$ can be obtained as follows
\begin{eqnarray*}
\mu_ r^{'} & = &  \frac{\Gamma (p+q)}{\Gamma (q)}\sum_{n=0}^{\infty}\sum_{m=0}^{\infty}\sum_{\ell=0}^{\infty} \frac{(-1)^{n+m}(q+n)^m (m+1)^{r+2\ell +1} \alpha ^\ell \beta ^{r+\ell+1}}{n! m! \ell!\Gamma (p-n)}\times
\nonumber\\
&& \hspace{3 cm}
\left[ \alpha \Gamma(-r-\ell-1)+\frac{\Gamma(-r-\ell+1)}{(m+1)^2 \beta}\right ].
\end{eqnarray*}
This completes the proof.
	
\end{proof}

Furthermore, using the first four moments in Eq. (\ref{26}), the measures of skewness and kurtosis of the BFWD$(\eta )$ can be expressed in the following relations, (see Bowley \cite{Bowley1920}).
\\
\noindent
The skewness of the BFWD$(\eta )$
\begin{equation}
Sk(X)=\frac{E(X^3)- 3\mu  E(X^2) +2\mu^3}{\sigma^3}.
\end{equation}


\noindent
The kurtosis of the BFWD$(\eta )$
\begin{equation}
Ku(X)=\frac{E(X^4)-4\mu  E(X^2) + 6\mu^2  E(X^2) - 3\mu^4}{\sigma^4}.
\end{equation}

%


\section{The Moment Generating Function}
We derive the moment generating function MGF of the BFWD$(\eta )$  by Theorem (\ref{Th2}) .

\begin{theorem} \label{Th2}
If $X$ is a random variable having  the BFWD$(\eta )$, so the  moment generating function $MGF$ of $X$ is given by
\begin{eqnarray}
M_X(t) & = &  \frac{\Gamma (p+q)}{\Gamma (q)}\sum_{n=0}^{\infty}\sum_{m=0}^{\infty}\sum_{\ell=0}^{\infty}\sum_{r=0}^{\infty} \frac{(-1)^{n+m}(q+n)^m (m+1)^{r+2\ell +1} \alpha ^\ell \beta ^{r+\ell+1}t^r}{n! m! \ell! r!\Gamma (p-n)}\times
\nonumber\\
&& \hspace{3 cm}
\left[ \alpha \Gamma(-r-\ell-1)+\frac{\Gamma(-r-\ell+1)}{(m+1)^2 \beta}\right ].
\end{eqnarray}
\end{theorem}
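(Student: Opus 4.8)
The plan is to derive the MGF straight from its definition and then recognise the raw moments already computed in Theorem (\ref{Th1}). First I would write
\[
M_X(t)=E\!\left(e^{tX}\right)=\int_0^{\infty} e^{tx}\, f_{BFW}(x;\eta)\,dx ,
\]
expand the exponential by its power series $e^{tx}=\sum_{r=0}^{\infty}\frac{t^r x^r}{r!}$, and interchange summation and integration to get
\[
M_X(t)=\sum_{r=0}^{\infty}\frac{t^r}{r!}\int_0^{\infty} x^r f_{BFW}(x;\eta)\,dx=\sum_{r=0}^{\infty}\frac{t^r}{r!}\,\mu_r^{'} .
\]

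The second step is purely a substitution: replace $\mu_r^{'}$ by the triple series in $n,m,\ell$ provided by Theorem (\ref{Th1}), move the factor $t^r/r!$ inside the sum, and merge the four indices $n,m,\ell,r$ into a single quadruple summation. This reproduces the claimed formula verbatim, the new ingredients being only the extra $r!$ in the denominator and the extra $t^r$ in the numerator; no further integral needs to be evaluated, since all the integration was already carried out in the proof of Theorem (\ref{Th1}).

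The one genuinely delicate point is the legitimacy of exchanging $\sum_r$ with the integral, together with the nested rearrangements inherited from (\ref{26}). As in Theorem (\ref{Th1}), the symbols $\Gamma(-r-\ell-1)$ and $\Gamma(-r-\ell+1)$ are to be read as the regularised values recalled just before that theorem, so the identity holds in the same formal sense as (\ref{26}); on the set of $t$ for which the resulting multiple series converges absolutely, Fubini's theorem justifies every interchange and the equality is valid in the usual analytic sense. I would therefore keep the write-up short, presenting the statement essentially as a corollary of Theorem (\ref{Th1}): expand $e^{tx}$, integrate term by term, and quote (\ref{26}).
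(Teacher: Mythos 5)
Your proposal matches the paper's own proof: the paper also writes $M_X(t)=\int_0^\infty e^{tx}f(x)\,dx$, expands $e^{tx}=\sum_{r\ge 0}t^rx^r/r!$, interchanges sum and integral to get $M_X(t)=\sum_{r\ge 0}\frac{t^r}{r!}\mu_r^{'}$, and then substitutes the series of Eq. (\ref{26}). Your added remarks on the regularised gamma values and on justifying the interchange are consistent with (indeed slightly more careful than) the paper's argument, so the proof is correct and essentially identical in approach.
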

\begin{proof}
The moment generating function $MGF$ of the random variable $X$ with the PDF is
\begin{equation} \label{28}
M_X(t)=\int\limits_{0}^{\infty} e^{tx} f(x) dx.
\end{equation}

\noindent
Since   $$e^{tx}= \sum_{r=0}^{\infty} \frac{t^r x^r}{r!},$$ then we get
\begin{equation} \label{28a}
M_X(t) = \sum_{r=0}^\infty \frac{t^r}{r!} \int_0^{\infty} x^r f(x) dx.
=
\sum_{r=0}^\infty \frac{t^r}{r!} \mu_r^{'}.
\end{equation}

\noindent
Using $\mu_r^{'}$ from Eq. (\ref{26}) into Eq. (\ref{28a}) the $MGF$  of the BFWD$(\eta )$ can be obtained as follows
\begin{eqnarray*}
M_X(t) & = &  \frac{\Gamma (p+q)}{\Gamma (q)}\sum_{n=0}^{\infty}\sum_{m=0}^{\infty}\sum_{\ell=0}^{\infty}\sum_{r=0}^{\infty} \frac{(-1)^{n+m}(q+n)^m (m+1)^{r+2\ell +1} \alpha ^\ell \beta ^{r+\ell+1}t^r}{n! m! \ell! r!\Gamma (p-n)}\times
\nonumber\\
&& \hspace{3 cm}
\left[ \alpha \Gamma(-r-\ell-1)+\frac{\Gamma(-r-\ell+1)}{(m+1)^2 \beta}\right ].
\end{eqnarray*}
		
\noindent
This completes the proof.
\end{proof}

\section{The Order Statistics }
Suppose $X_{1:n},X_{2:n},\cdots,X_{n:n}$ denote the order statistics obtained from a random sample $X_1$, $X_2$, $\cdots$, $X_n$ taken from a continuous population with distribution function $F_{BFW}(x;\eta )$ and density function  $f_{BFW}(x;\eta )$, so the PDF of $X_{r:n}$ can be written as
\begin{equation} \label{29}
f_{r:n}(x;\eta )=\frac{1}{B(r,n-r+1)}\left[F_{BFW}(x;\eta )\right]^{r-1} \left[1-F_{BFW}(x;\eta)\right]^{n-r} f_{BFW}(x;\eta ),
\end{equation}
	
\noindent
where $F_{BFW}(x;\eta )$ and  $f_{BFW}(x;\eta )$ are the CDF and PDF of the BFWD$(\eta )$ given by Eq. (\ref{14}) and Eq. (\ref{15}), respectively.  Can be defined first order statistics as $X_{1:n}= \min(X_1,X_2,\cdots,X_n)$, and the last order statistics as $X_{n:n}= \max (X_1,X_2,\cdots,X_n)$, where   $0<F_{BFW}(x;\eta)<1,$  $\forall x>0.$
\\
The binomial expansion of $[1-F_{BFW}(x;\eta)]^{n-r}$ can be expressed as
\begin{equation} \label{30}
\left[1-F_{BFW}(x;\eta)\right]^{n-r}=\sum_{i=0}^{n-r}\begin{pmatrix} n-r \\ i \end{pmatrix} (-1)^i [F_{BFW}(x;\eta )]^i.
\end{equation}

\noindent
Using Eq. (\ref{30}) in Eq. (\ref{29}), we get
\begin{equation} \label{31}
f_{r:n}(x;\eta )=\frac{1}{B(r,n-r+1)} f_{BFW}(x;\eta )\sum_{i=0}^{n-r}\begin{pmatrix} n-r \\ i \end{pmatrix} (-1)^i \left[F_{BFW}(x;\eta)\right]^{i+r-1}.
\end{equation}

\noindent
Using Eq. (\ref{14}) and Eq. (\ref{15}) in Eq. (\ref{31}), we obtain
\begin{equation} \label{32}
f_{r:n}(x;\eta )=\sum_{i=0}^{n-r} \frac{(-1)^i n!}{i! (r-1)! (n-r-i)!} \left[F_{BFW}(x,\eta )\right]^{i+r-1} f_{BFW}(x;\eta ).
\end{equation}

\noindent
Equation (\ref{32}) shows that the $f_{r:n}(x;\eta )$ is the weighted average of the BFWD$(\eta )$ with various shape parameters.

\section{Parameters Estimation}
Parameters estimation of the BFWD$(\alpha ,\beta , p, q)$, point and interval estimation  are derived by using maximum likelihood estimation MLE method with a complete sample.

\subsection{Maximum likelihood estimation}
Let $x_1,x_2,\cdots ,x_n$ denote a random sample of complete data from the BFWD$(\eta )$. The Likelihood function $L$ is defined as follows
\begin{equation} \label{33}
L = \prod_{i=1}^{n} f(x_i;\vartheta).
\end{equation}
	
Using Eq.(\ref{15}) in Eq.(\ref{33}), we have
\begin{equation*}
L = \prod_{i=1}^{n}\frac{\Gamma (p+q)}{\Gamma (p).\Gamma (q)} \left(\alpha +\frac{\beta }{x_i^2}\right) e^{\alpha x_i -\frac{\beta }{x_i}} e^{-q e^{\alpha x_i -\frac{\beta }{x_i}}} \left[1 - e^{-e^{\alpha x_i-\frac{\beta }{x_i}}} \right ]^{p-1}.
\end{equation*}
	
The log-likelihood function is
\begin{equation} \label{34}
\mathcal{L} = n \left [ \ln\left(\frac{\Gamma (p+q)}{\Gamma (p).\Gamma (q)}\right)\right ] + \sum_{i=1}^{n} \ln \left ( \alpha +\frac{\beta }{x_i^2} \right ) +  \sum_{i=1}^{n} \left(\alpha x_i -\frac{\beta }{x_i}\right) -q \sum_{i=1}^{n} e^{\alpha x_i -\frac{\beta }{x_i}} +(p-1) \sum_{i=1}^{n} \ln\left(1-e^{-e^{\alpha x_i-\frac{\beta }{x_i}}}\right).
\end{equation}

The MLE of the parameters $\alpha, \beta, p $ and $q$ are obtained by differentiating the  function $\mathcal{L}$ in Equation (\ref{34}) with respect to  $\alpha, \beta, p $ and $q$, then equating it to zero, as follows
\begin{eqnarray} \label{35}
\frac{\partial \mathcal{L}}{\partial \alpha } & = &
\sum_{i=1}^{n}\frac{ x_i^2}{\beta+\alpha x_i^2 }+\sum_{i=1}^{n} x_i - q\sum_{i=1}^{n} x_i e^{\alpha x_i-\frac{\beta }{x_i}} + (p-1)  \sum_{i=1}^{n} \frac{x_i e^{\alpha x_i-\frac{\beta }{x_i}}}{e^{e^{\alpha x_i-\frac{\beta }{x_i}}} -1} = 0,
\\ \label{36}
\frac{\partial \mathcal{L}}{\partial \beta  } & = &
\sum_{i=1}^{n}\frac{1}{\beta+\alpha x_i^2}-\sum_{i=1}^{n} \frac{1}{x_i} + q \sum_{i=1}^{n} \frac{1}{x_i} e^{\alpha x_i-\frac{\beta }{x_i}} - (p-1)  \sum_{i=1}^{n} \frac{e^{\alpha x_i-\frac{\beta }{x_i}}}{x_i\left( e^{e^{\alpha x_i - \frac{\beta }{x_i}}}-1\right)} = 0,
\\ \label{37}
\frac{\partial \mathcal{L} }{\partial p } &=&
n\psi _p (p+q) - n \psi (p) + \sum_{i=1}^{n} \ln \left(1- e^{-e^{\alpha x_i - \frac{\beta }{x_i}}}\right) = 0,
\\ \label{38}
\frac{\partial \mathcal{L} }{\partial q } &=&
n\psi _q (p+q) - n \psi (q) - \sum_{i=1}^{n} e^{\alpha x_i-\frac{\beta }{x_i}} = 0.
\end{eqnarray}

The MLEs can be obtained by solving the equations, (\ref{35})-(\ref{38}), numerically for $\alpha, \beta, p$ and $q$ by using Mathcad or Maple software.

\subsection{Asymptotic confidence bounds}
The asymptotic confidence intervals can be obtained by using variance covariance matrix $I^{-1}$, since the parameters ($\alpha, \beta, p, q$)  are positive and the $I^{-1}$ is inverse of the observed information matrix given by

\begin{eqnarray} \label{39}
\mathbf{I^{-1}} &= &
\left(
\begin{array}{cccc}
-\frac{\partial ^2 \mathcal{L}}{\partial \alpha  ^2} & -\frac{\partial ^2 \mathcal{L}}{\partial \alpha  \partial \beta } & -\frac{\partial ^2 \mathcal{L}}{\partial \alpha \partial p } &-\frac{\partial ^2 \mathcal{L}}{\partial \alpha \partial q }
\\
-\frac{\partial ^2 \mathcal{L}}{\partial \beta  \partial \alpha } & -\frac{\partial ^2 \mathcal{L}}{\partial \beta^2} & -\frac{\partial ^2 \mathcal{L}}{\partial \beta \partial p}  & -\frac{\partial ^2 \mathcal{L}}{\partial \beta \partial q}
\\
-\frac{\partial ^2 \mathcal{L}}{\partial p  \partial \alpha } & -\frac{\partial ^2 \mathcal{L}}{\partial p  \partial \beta } & -\frac{\partial ^2 \mathcal{L}}{\partial p ^2} & -\frac{\partial ^2 \mathcal{L}}{\partial p  \partial q }
\\
-\frac{\partial ^2 \mathcal{L}}{\partial q  \partial \alpha } & -\frac{\partial ^2 \mathcal{L}}{\partial q  \partial \beta } & -\frac{\partial ^2 \mathcal{L}}{\partial q  \partial p } & -\frac{\partial ^2 \mathcal{L}}{\partial q^2}
\end{array}
\right)^{-1}
=
\left(
\begin{array}{cccc}
var(\hat{\alpha }) & cov( \hat{\alpha }, \hat{\beta }) & cov( \hat{\alpha }, \hat{ p})& cov( \hat{\alpha }, \hat{ q})
\\
cov( \hat{\beta },\hat{\alpha  }) & var( \hat{\beta }) & cov( \hat{\beta }, \hat{ p })& cov( \hat{\beta }, \hat{ q  })
\\
cov( \hat{ p}, \hat{\alpha }) & cov( \hat{ p}, \hat{\beta }) &  var( \hat{ p})& cov( \hat{ p}, \hat{q })
\\
cov( \hat{ q}, \hat{\alpha }) & cov( \hat{ q}, \hat{\beta }) & cov( \hat{ q}, \hat{p})&  var( \hat{ q})
\end{array}
\right),
\nonumber\\
\end{eqnarray}

\noindent
where
\begin{eqnarray} \label{40}
\frac{\partial ^2 \mathcal{L}}{\partial \alpha^2 } &= &
-\sum_{i=1}^{n}\frac{x_i^4}{\left(\beta+\alpha x_i^2\right)^2} - q\sum_{i=1}^{n} x_i^2 e^{\alpha x_i -\frac{\beta }{x_i}} + (p-1)\sum_{i=1}^n x_i H_i,
\\ \label{41}
\frac{\partial ^2 \mathcal{L}}{\partial \alpha \partial \beta } &= &
-\sum_{i=1}^{n}\frac{x_i^2}{\left(\beta+\alpha x_i^2\right)^2} + q\sum_{i=1}^{n} e^{\alpha x_i -\frac{\beta }{x_i}} - (p-1) \sum_{i=1}^n H_i,
\\ \label{42}
\frac{\partial ^2 \mathcal{L}}{\partial \alpha  \partial p} &= &
\sum_{i=1}^{n} \frac {x_i e^{\alpha x_i-\frac{\beta }{x_i}}}{\left[ e^{e^{\alpha x_i-\frac{\beta }{x_i}}}-1\right]},
\\\label{43}
\frac{\partial ^2 \mathcal{L}}{\partial \alpha  \partial q} &= &
-\sum_{i=1}^{n} x_i e^{\alpha x_i-\frac{\beta }{x_i}},
\end{eqnarray}
\begin{eqnarray} \label{44}
\frac{\partial ^2 \mathcal{L}}{\partial \beta^2 } &= &
-\sum_{i=1}^{n}\frac{1}{\left(\beta+\alpha x_i^2\right)^2} -q \sum_{i=1}^{n}\frac{e^{\alpha x_i -\frac{\beta }{x_i}}}{x_i^2} + (p-1)\sum_{i=1}^n \frac{H_i}{x_i^2},
\\ \label{45}
\frac{\partial ^2 \mathcal{L}}{\partial \beta  \partial p  }&= &
-\sum_{i=1}^{n}\frac{e^{\alpha x_i-\frac{\beta }{x_i}}}{x_i \left[ e^{e^{\alpha x_i-\frac{\beta }{x_i}}} -1\right]},
\\ \label{46}
\frac{\partial ^2 \mathcal{L}}{\partial \beta  \partial q  }&= &
\sum_{i=1}^{n}\frac{e^{\alpha x_i-\frac{\beta }{x_i}}}{x_i},
\\ \label{47}
\frac{\partial ^2 \mathcal{L}}{\partial p^2} &=&
n\frac{\partial ^2}{\partial p^2} \ln \left[ \Gamma (p+q)\right ] - n \frac{\partial ^2}{\partial p^2} \ln \left [ \Gamma (p)\right ],
\\ \label{48}
\frac{\partial ^2 \mathcal{L}}{\partial p  \partial q  }&= &
n\frac{\partial ^2}{\partial p \partial q} \ln \left [ \Gamma (p+q)\right ],
\\ \label{49}
\frac{\partial ^2 \mathcal{L}}{\partial q^2} &=&
n\frac{\partial ^2}{\partial q^2} \ln \left [ \Gamma (p+q)\right ] - n \frac{\partial ^2}{\partial q^2} \ln \left[ \Gamma (q)\right ],
\end{eqnarray}

where $$H_i= \frac{x_i e^{\alpha x_i-\frac{\beta }{x_i}}\left[e^{e^{\alpha x_i-\frac{\beta }{x_i}}}\left(1-e^{\alpha x_i-\frac{\beta }{x_i}}\right)-1\right ]}{\left[e^{e^{\alpha x_i-\frac{\beta }{x_i}}} -1\right]^2}.$$

Moreover, the $(1-\lambda )100\%$ confidence intervals of the four parameters can be obtained by using variance matrix as follows
$$
\hat{\alpha} \pm Z_{\frac{\lambda }{2}}\sqrt{var(\hat{\alpha })},\quad \hat{\beta} \pm Z_{\frac{\lambda}{2}}\sqrt{var(\hat{\beta})}, \quad \hat{p} \pm Z_{\frac{\lambda}{2}}\sqrt{var(\hat{p})}, \quad \hat{q} \pm Z_{\frac{\lambda}{2}}\sqrt{var(\hat{q})},
$$

\noindent
where  $\lambda >0$, and $Z_{\frac{\lambda }{2}}$ denote the upper $(\frac{\lambda }{2})$-th percentile of standard normal distribution.


\section{Application}
In this Section, we will analysis  a real data set using the BFWD $(\alpha, \beta, p, q)$ and compare it with the other fitted distributions such as flexible Weibull extension distribution (FWED), Weibull distribution (WD), Modified Weibull distribution (MWD), Reduced Additive Weibull distribution (RAWD) and Extended Weibull distribution (EWD) by using some criteria statistical such as  K--S statistics ( Kolmogorov Smirnov),  Akaike information criterion $AIC$ and Akaike Information criterion with correction $AICC$, see \cite{Akaike1974}. Moreover, we use Bayesian information criterion $BIC$, see  \cite{Schwarz1978}.

We will use the data given by Salman et  al.  \cite{Salmanetal1999}, in Table 1.

\begin{center}
Table 1: Time between failures of secondary reactor pumps, (thousands of hours) \cite{Salmanetal1999}\\
 \begin{tabular}{c c c c c c c c } 
\hline\hline
2.160	& 0.746   & 0.402   & 0.954	& 0.491 & 6.560	& 4.992  & 0.347 \\
0.150   & 0.358   & 0.101	& 1.359	& 3.465	& 1.060	& 0.614  & 1.921 \\	
4.082   & 0.199	  & 0.605	& 0.273 & 0.070	& 0.062	& 5.320  &  \\	\hline
\hline
\end{tabular}
\label{1}
\end{center}

\noindent
Table 2 gives MLEs of parameters of the BFWD$(\eta )$ and the Statistics K--S. The values of $AIC$, $AICC$, $BIC$, $HQIC$ and the log-likelihood functions $\mathcal{L}$  are in Table 3.

\begin{center}
Table 2: MLEs and K--S of parameters for secondary reactor pumps.\\
\begin{tabular}{lccccc} \hline \hline
Model & $\hat{\alpha}$ & $\hat{\beta}$ & $\hat{p }$ &$\hat{q }$ &  K--S\\ \hline
FWD   & 0.0207	    & 2.5875     &  --      &  --    & 0.1342\\
WD	  & 0.8077      & 13.9148	 &  --	    &  --    & 0.1173\\
MWD   & 0.1213	    & 0.7924	 & 0.0009	&  --    & 0.1188\\
RAWD  & 0.0070	    & 1.7292	 & 0.0452	&  --    &  0.1619\\
EWD	  & 0.4189	    & 1.0212	 & 10.2778	&  --    & 0.1057\\
BFWD  & 0.052     & 0.024 & 35.077&  20.328  & 0.1151 \\ \hline \hline
\end{tabular}
\end{center}

\vspace{0.2 cm}

\begin{center}
Table 3: Log-likelihood $\mathcal{L}$, $AIC$, $AICC$,$ BIC$ and $HQIC$ values of models fitted.\\
\begin{tabular}{lcccccc} \hline \hline
Model & $\mathcal{L}$ &-2 $\mathcal{L}$ &	AIC	& AICC	& BIC  & HQIC \\ \hline
FWD	  & -83.3424  & 166.6848 & 170.6848 & 171.2848 & 172.95579 & 171.2559\\
WD    & -85.4734  & 170.9468 & 174.9468 & 175.5468 & 177.21779 & 175.5179\\
MWD	  & -85.4677  & 170.9354 & 176.9354 & 178.1986 & 180.34188 & 177.7921\\
RAWD  & -86.0728  & 172.1456 & 178.1456 & 179.4088 & 181.55208 & 179.0023\\
EWD	  & -86.6343  & 173.2686 & 179.2686 & 180.5318 & 182.67508 & 180.1253\\
BFWD  & -30.768	  &	61.5360	&	69.5360	&	71.7582	&	74.0780	&	70.6783	\\ \hline \hline
\end{tabular}
\end{center}

We find that the BFWD$(\eta )$  with the  parameters $( \alpha, \beta, p, q) $ gives a better fit to data set than the previous generalizations of Weibull distributions such as a flexible Weibull distribution FWD, Weibull distribution WD, Modified Weibull distribution MWD, Reduced Additive Weibull distribution RAWD and Extended Weibull distribution EWD. It has the largest likelihood function $L$, and the smallest  criteria values $AIC$, $AICC$, $BIC$, $HQIC$ and K--S  as shown in Tables 2 and 3.\\

Replace the MLEs of the unknown parameters for BFWD$ (\alpha, \beta, p , q) $  into Eq. (\ref{39}), we can obtained estimation of the variance covariance matrix as follows

$$
I_0^{-1}=\left(
\begin{array}{cccc}
2.123 \times 10^{-3}	 & 9.575 \times 10^{-4}	& -2.748	& -1.6 \\
9.575 \times 10^{-4} 	& 5.558 \times 10^{-4}	& -1.415	& -0.81\\
-2.748	& -1.415	& 3.912 \times 10^{3}	& 2.256 \times 10^{3}\\
-1.6	& -0.81	& 2.256 \times 10^{3}	& 1.304 \times 10^{3}\\
\end{array}
\right).
$$

The confidence intervals for  approximately 95\% two sided  of the unknown parameters $\alpha, \beta, p$ and $q$ are $\left[0, 0.142\right]$, $\left[0, 0.07\right]$, $\left[0, 157.671\right]$ and  $\left[0, 91.105\right]$, respectively.\\

\noindent
From,  Figures \ref{fig:4} and \ref{fig:5} can be seen that the log-likelihood function $\mathcal{L}$ have unique solution.

\begin{figure}[!h] 
\begin{center}
		\includegraphics[width=6cm,height=5cm]{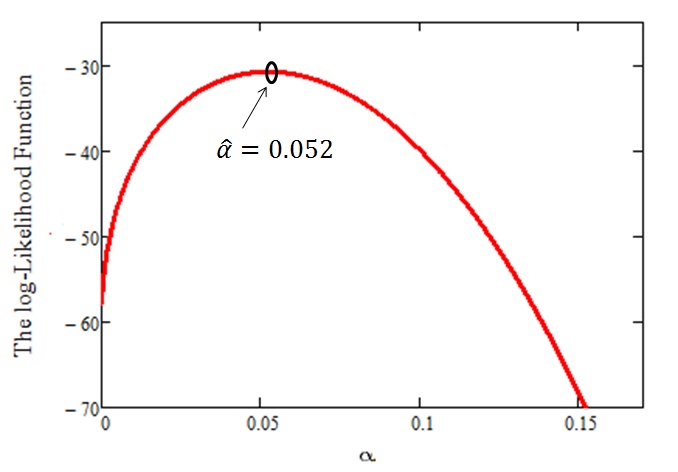}
		\includegraphics[width=6cm,height=5cm]{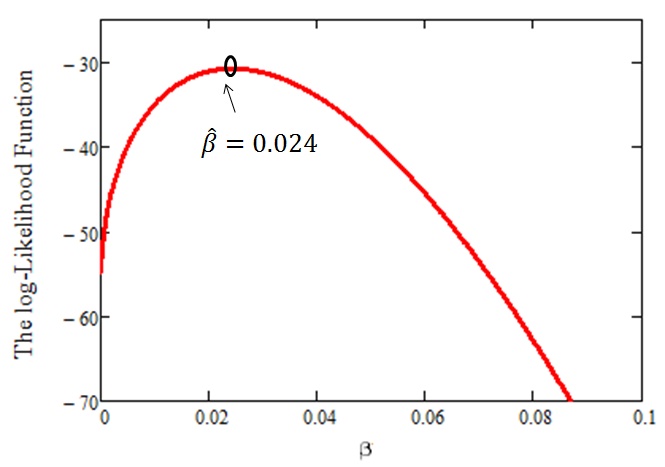}
\end{center}
\caption{The profile of the log-likelihood function of $\alpha, \beta$.} \label{fig:4}
\end{figure}

\begin{figure} [!h]
\begin{center}
		\includegraphics[width=6cm,height=5cm]{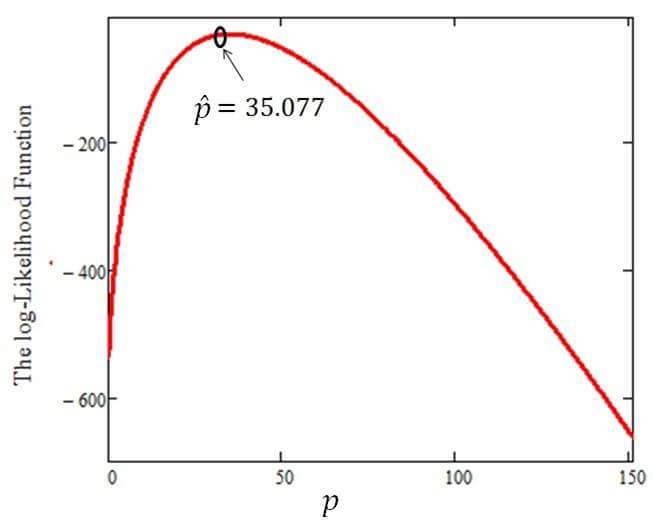}
		\includegraphics[width=6cm,height=5cm]{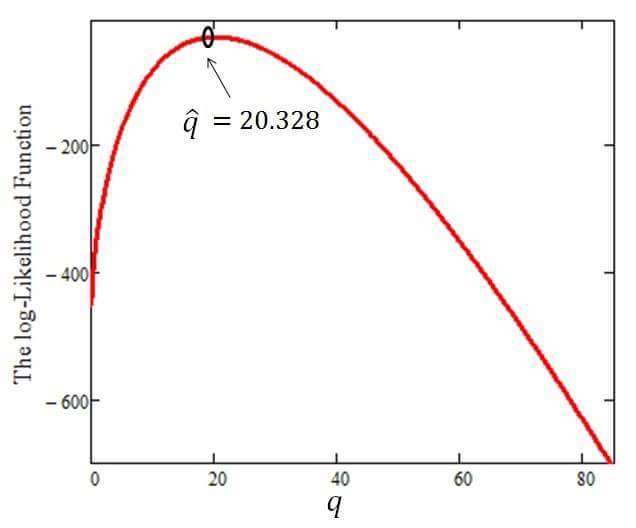}
\end{center}
\caption{The profile of the log-likelihood function of $p$ and $q$.}\label{fig:5}
\end{figure}

\noindent
Figure \ref{fig:6}, represents the estimation for the reliability function $S(x)$, by using the Kaplan-Meier method and its fitted parametric estimations when the distribution is assumed to be  FWD, WD, MWD, RAWD, EWD and BFWD are computed and plotted in the following shape.

\begin{figure} [!h]
\begin{center}
\includegraphics[scale=0.52]{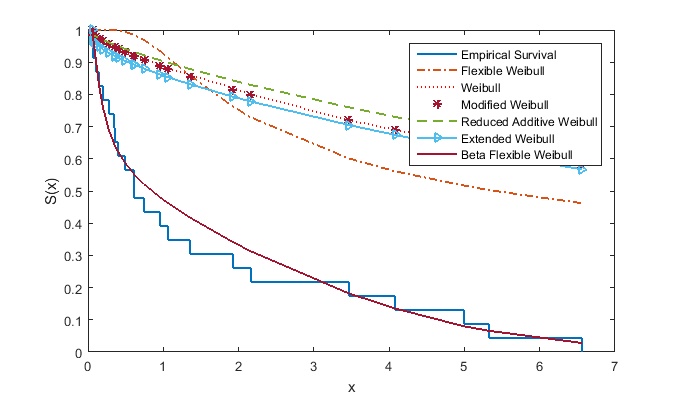}
\end{center}
\caption{Display the Kaplan-Meier estimate of the reliability function for the data.}
\label{fig:6}
\end{figure}

\noindent
Figure \ref{fig:7} gives the shape of the distribution function for the  FWD, WD, MWD, RAWD, EWD and BFWD after that  the unknown parameters included in each distribution are replaced by their maximum likelihood estimation MLE

\begin{figure} [!h]
\begin{center}
\includegraphics[scale=0.52]{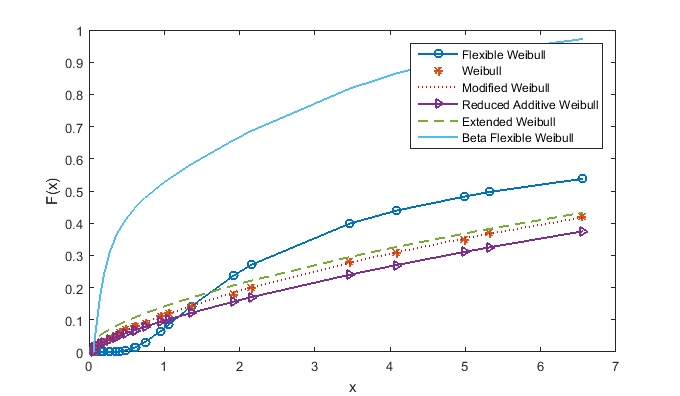}
\end{center}
\caption{The Fitted cumulative distribution function for the data.}
 \label{fig:7}
\end{figure}

\newpage

\section{Conclusion}
We studied a new generalized distribution, based on the beta generated method. This new generalization is called the beta flexible Weibull BFWD distribution . Its definition and  some of statistical properties are studied.  The maximum likelihood method is used for estimating parameters.  The advantage of the BFWD is interpreted by an application using real data. Moreover, it is  shown that the beta flexible Weibull BFWD distribution  fits better than existing generalizations of the Weibull distribution.


\end{document}